\newtheorem{theorem}{Theorem}[section]
\newtheorem{lemma}[theorem]{Lemma}
\newtheorem{claim}[theorem]{Claim}
\newtheorem{corollary}[theorem]{Corollary}
\newtheorem{definition}[theorem]{Definition}
\newtheorem*{theorem*}{Theorem}
\DeclareMathOperator{\tw}{tw}
\DeclarePairedDelimiter\abs{\lvert}{\rvert}%
\newcommand{\ep}{Erd\H{o}s-P\'osa}
\title{A tight Erd\H{o}s-P\'osa function for long cycles}
\author{F. Mousset$^\ddag$\thanks{author was supported by grant no.\ 6910960 of the Fonds National de la Recherche, Luxembourg.}, A. Noever$^\ddag$\footnote{author was supported by grant no.\ 200021 143338  of the Swiss National Science Foundation.}, N. \v{S}kori\'c$^\ddag$\!\!, and F. Weissenberger\thanks{Department of Computer Science, ETH
Z\"urich, 8092 Z\"urich, Switzerland. \newline Email: {\tt
\{frank.mousset|anoever|nskoric|felix.weissenberger\}@inf.ethz.ch}.}
}
\begin{document}

\maketitle

\abstract{
A classic result of Erd\H{o}s and P\'osa says that any graph contains either
$k$ vertex-disjoint cycles or can be made acyclic by deleting at most $O(k \log
k)$ vertices. Here we generalize this result by showing that
for all numbers $k$ and $l$ and for every graph $G$, either $G$ contains $k$
vertex-disjoint cycles of length at least $l$, or there exists a set $X$ of
$\mathcal O(kl+k\log k)$ vertices that meets all cycles of length at least $l$
in $G$. As a corollary, the tree-width of any graph $G$ that does not contain
$k$ vertex-disjoint cycles of length at least $l$ is of order $\mathcal
O(kl+k\log k)$. These results improve on the work of Birmel\'e, Bondy and Reed
'07 and Fiorini and Herinckx '14 and are optimal up to constant factors.}

\section{Introduction}\label{sec:intro}

Let $\mathcal F$ be any family of graphs. Given a graph $G$, a subset
$X\subseteq V(G)$ is called a \emph{transversal} (of $\mathcal F$) if the graph
$G-X$ obtained by deleting $X$ does not contain any member of $\mathcal F$. We
say that $\mathcal{F}$ has the \emph{Erd\H{o}s-P\'osa property} if there exists
a function $f\colon\mathbb{N}\to \mathbb R$ such that every graph $G$ which
does not contain $k$ vertex-disjoint members of $\mathcal{F}$ contains a
transversal of size at most $f(k)$.

The study of this property dates back to 1965 when Erd\H{o}s and P\'osa
\cite{erdos65} showed the following:

\begin{theorem*}\label{thm:erdos-posa}
  Every graph contains either $k$ vertex-disjoint cycles or a set of
  at most $f(k) = (4 + o(1)) k \log k$ vertices meeting all its cycles.
\end{theorem*}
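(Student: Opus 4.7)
The plan is to prove the theorem by induction on $k$. The base case $k = 1$ is immediate: either $G$ contains a cycle, giving one disjoint cycle, or $G$ is a forest and the empty set is a transversal.

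For the inductive step I would first \emph{preprocess} $G$: iteratively delete vertices of degree at most $1$ and suppress (dissolve) vertices of degree $2$ by replacing them with a single edge between their two neighbours. Neither operation changes whether the graph admits a packing of $k$ vertex-disjoint cycles, and every transversal of the resulting graph $G'$ pulls back to a transversal of $G$ of the same size. If $G'$ is empty we are done, so from here on $G'$ has minimum degree at least $3$. A standard breadth-first search from any vertex of $G'$ then exhausts $V(G')$ within $\log_2 n'$ levels (because non-root layers branch by a factor of at least $2$), forcing a cross-edge that closes a cycle $C$ with $\abs{V(C)} \leq 2\log_2 n'$, where $n' = \abs{V(G')}$. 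Since $G'$ admits no packing of $k$ cycles, $G' - V(C)$ admits no packing of $k - 1$ cycles, and the induction hypothesis yields a transversal $X$ of $G' - V(C)$ of size $f(k-1)$; then $X \cup V(C)$ is a transversal of $G$ of size at most $f(k-1) + 2\log_2 n'$.

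The main obstacle is that $\log_2 n'$ is not bounded in terms of $k$ alone, so the naive recursion only gives $f(k) = O(k\log n)$. To obtain the sharp $O(k \log k)$ bound I would instead greedily peel off short cycles $C_1, C_2, \ldots$ one after another, re-preprocessing the graph between extractions; since $G$ admits no packing of $k$ cycles the process terminates after at most $k - 1$ iterations. An amortised analysis, exploiting that each extraction is either cheap (because re-preprocessing triggers a cascade of degree-$\leq 2$ suppressions that shrinks the live graph rapidly) or takes place in a graph that is already small enough to supply its entire vertex set as a transversal, converts the per-cycle cost into roughly $4\log k$ on average. Pinning down the leading constant $4$ is the most delicate accounting step, and is what requires the ``$(4+o(1))$'' rather than a larger constant in the final bound.
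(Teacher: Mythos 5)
This theorem is not proved in the paper at all: it is cited as the classical Erd\H{o}s--P\'osa theorem~\cite{erdos65}, and the paper's only machinery connected to it is Lemma~\ref{lem:cubic} (Diestel's bound $s_k$ for $3$-regular multigraphs), which is also used as a black box. So your proposal is not ``the same as the paper's proof'' in any meaningful sense. That said, your preprocessing and girth bound are exactly the right ingredients: reducing to minimum degree~$3$ and observing that BFS finds a cycle of length at most roughly $2\log_2 n'$ is indeed how the $O(\log k)$ per cycle eventually appears, and it is also the engine behind Lemma~\ref{lem:cubic}.

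The gap is in the step you call ``amortised analysis.'' The dichotomy you propose --- ``each extraction is cheap, or happens in a graph small enough to dump wholesale'' --- does not cover the problematic case. If the preprocessed graph $G'$ has $n'$ vertices with $n'$ enormously larger than $k\log k$, the very first extraction costs about $2\log_2 n'$ and the graph is not small; no amount of re-preprocessing after a single $O(\log n')$-vertex deletion shrinks an $n'$-vertex graph down to size $O(k\log k)$ in one step. Summing extraction costs over the iterations therefore gives $O(k\log n)$, not $O(k\log k)$, and your sketch does not say how the extra $\log(n/k)$ factor is to be absorbed. The actual resolution is not an amortization over extractions but a sharp dichotomy fixed at the outset: if $n' \geq s_k$ (for a threshold $s_k = (4+o(1))k\log k$), one shows that iterated shortest-cycle extraction necessarily produces $k$ \emph{disjoint} cycles before the graph can disappear --- the arithmetic $n_{i+1} \geq n_i - 2|V(C_i)| \geq s_{k-i}$ is exactly what fixes $s_k$ --- so no transversal is needed at all; if instead $n' < s_k$, then $V(G')$ itself is a transversal of size $< s_k$, with no extraction and no amortization involved. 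Your extractions should only ever be a tool to certify the packing when $n'$ is large, never a way to assemble the transversal; as currently written, you try to use them for both, and the bound does not close. Pinning down the constant $4$ is then a matter of tracking the girth bound $\leq 2\log_2 n + O(1)$ and the factor-of-two loss from suppressing the neighbours of the deleted cycle, not an artifact of amortization.
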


The value of $f(k)$ in this theorem is optimal up to the constant factors. The
\ep{} property is closely related to classical `covering vs.\ packing'
results in graph theory, such as K\H{o}nig's theorem or Menger's theorem. For
example, K\H{o}nig's theorem can be stated as follows: every bipartite graph contains
either $k$ vertex-disjoint edges or a set of $f(k)=k$ vertices meeting all the
edges. The above result has spawned a long line of papers about the
duality between packing and covering of different families of graphs, directed
graphs, hypergraphs, rooted graphs, and other combinatorial objects (see a
recent survey of Raymond and Thilikos \cite{2016survey} for more information).


In this paper, we are interested in the Erd\H{o}s-P\'osa property for the
family $\mathcal F_l = \{C_m \mid m \geq l\}$ of cycles of length at least $l$.
A 1988 result of Thomassen~\cite{thomassen88} implies that for every $l$, the
family $\mathcal F_l$ has the Erd\H{o}s-P\'osa property with a function $f(l,
k)\in 2^{l^{\mathcal O(k)}}$ (though recent results of Chekuri and Chuzhoy make
it possible to substantially improve the dependency on $k$ in this
bound~\cite{chekuri13}). This result was sharpened by Birmel\'e, Bondy, and
Reed~\cite{bondy07} to $f(l, k) \in \mathcal{O}(lk^2)$ in 2007 and by Fiorini
and Herinckx~\cite{fiorini14} to $f(l, k) \in \mathcal{O}(lk\log k)$ in 2014.
In this paper we improve these results to the asymptotically optimal bound
$f(l, k)\in \mathcal O(kl+k\log k)$, thus settling the question asked
in~\cite{bondy07} and~\cite{fiorini14}.

\begin{theorem}\label{thm:main}
  For every integer $l \ge 3$, the family $\mathcal{F}_l$
  of cycles of length at least $l$ has the Erd\H{o}s-P\'osa
  property with the function
  \[ f(l, k) = \begin{cases}
    6kl + 10k\log_2 k + 40k + 10k\log_2\log_2 k &\text{if }k\geq 2,\\
    0 & \text{if }k=1.
  \end{cases} \]
\end{theorem}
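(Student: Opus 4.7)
The plan is to proceed by induction on $k$, with the base case $k=1$ being immediate: the empty set is a transversal whenever $G$ contains no long cycle. For $k \ge 2$, suppose $G$ has no $k$ vertex-disjoint long cycles; the goal is to produce a transversal of size at most $f(l, k)$. After standard reductions we may assume the minimum degree of $G$ is at least $3$: vertices of degree at most $1$ lie on no cycle, and degree-$2$ vertices can be suppressed provided we keep track of the length contribution of the suppressed paths.

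The inductive step reduces to proving the following structural claim: there exists a set $W \subseteq V(G)$ with $|W| \le 6l + 10\log_2 k + \mathcal O(\log_2\log_2 k)$ such that $G - W$ contains no $k-1$ vertex-disjoint long cycles. Granted this, the inductive hypothesis applied to $G - W$ yields a transversal $Y$ of size at most $f(l, k-1)$, and $W \cup Y$ is a transversal of $G$ of size at most $f(l, k)$, since $f(l, k) - f(l, k-1)$ is precisely of the claimed order.

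To construct $W$, I would split into two cases. In the easy case, if $G$ contains a cycle $C$ of length in $[l, 6l]$, then $W := V(C)$ works: we have $|W| \le 6l$, and no $k-1$ disjoint long cycles can live in $G - V(C)$, for otherwise $C$ together with them would contradict the assumption on $G$. In the harder case, every long cycle in $G$ has length strictly greater than $6l$; in particular, any shortest long cycle $C_0$ is \emph{induced}, since a chord would split $C_0$ into two strictly shorter cycles at least one of which has length at least $l$ (as $|C_0| > 6l \ge 2l$), contradicting minimality. The plan is then to exploit this rigidity together with $\delta(G) \ge 3$ to extract a hitting set $W$ of size $\mathcal O(\log k)$: examine the bridges of $C_0$ in $G$ and, via a doubling argument tracking how rerouting through bridges yields fresh long cycles, either produce $k$ pairwise vertex-disjoint long cycles (contradiction) or identify a small set of witness vertices whose removal destroys every packing of $k-1$ disjoint long cycles. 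The logarithmic dependence mirrors the corresponding step in the classical \ep{} proof for arbitrary cycles.

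The main technical obstacle is the harder case: organizing the bridge and ear structure around $C_0$ so that the doubling argument correctly tracks vertex-disjointness of the cycles it produces, since sharing a bridge tends to force the cycles to share vertices. Standard tools---ear decompositions, combined with Menger-type separator arguments between bridges---should yield that either many disjoint long cycles exist or a hitting set of size $\mathcal O(\log k)$ suffices. Correctly balancing this packing-versus-covering trade-off around a rigid induced long cycle is the heart of the improvement from Fiorini and Herinckx's $\mathcal O(lk\log k)$ bound to the tight $\mathcal O(lk + k\log k)$ of Theorem~\ref{thm:main}.
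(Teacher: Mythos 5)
Your easy-case reduction matches the paper's first step exactly: if $G$ contains a long cycle $C$ of length at most $6l$, delete $V(C)$ and recurse, and the arithmetic $f(l,k)-f(l,k-1)\ge 6l + 10\log_2 k + \mathcal O(\log_2\log_2 k)$ does work out. However, the hard case---every long cycle in $G$ has length exceeding $6l$---is where essentially all the work lives, and there your proposal leaves a genuine gap.

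First, a structural mismatch: you propose to keep running the per-$k$ induction in the hard case, by finding a $W$ with $|W|\le 6l + 10\log_2 k + \mathcal O(\log_2\log_2 k)$ that drops the packing number of long cycles from $k-1$ to at most $k-2$. The paper never proves (and does not need) such a ``packing-decrement'' statement. Instead, in the hard case it abandons the induction on $k$ entirely and directly builds a transversal of size $\mathcal O(k\log k)$ with \emph{no} dependence on $l$. It does so by taking a maximal subgraph $H\subseteq G$ with all degrees $2$ or $3$ and no short cycles (which, as in the classical \ep{} proof, is a subdivision of a cubic multigraph together with at most $k-1$ disjoint long cycles), then invoking Diestel's lemma to cap the number of degree-$3$ vertices by $s_k=\mathcal O(k\log k)$. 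The crucial new machinery is the projection $\pi$ of $H$-paths to short paths in $H$, the parity/tree dichotomy of Lemma~\ref{lemma:projection}, the notion of a $\pi$-preserving set, and a Menger argument on a radius-$l$ ball around a shortest cycle in $H-X$ (Lemma~\ref{lemma:forest-cut}). None of this appears in your outline. Your sketch---take a shortest long cycle $C_0$, note it is induced, examine its bridges, run a ``doubling argument'' to get either $k$ disjoint long cycles or a hitting set of size $\mathcal O(\log k)$---contains no concrete construction and no argument for why the bridge structure would localize in this way. In particular, it is far from clear that removing $\mathcal O(l+\log k)$ vertices near one cycle can always decrease the packing number, and the paper's bound is obtained by a global argument that does not take this route.

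Second, the preliminary reduction to minimum degree $3$ by suppressing degree-$2$ vertices is not as ``standard'' as claimed here: suppression changes edge lengths (so the notion of ``long cycle'' becomes weighted), can create parallel edges and loops, and a minimum-length-$l$ constraint in the original graph does not translate into a clean unweighted constraint in the suppressed multigraph. The paper deliberately avoids this by working with an auxiliary subgraph $H$ of maximum degree $3$ inside $G$, together with the projection formalism, rather than modifying $G$ itself. So even if a bridge-based argument could be made to work, this preliminary step would need significant additional care. As written, the proposal recovers only the (short) first reduction and leaves the actual content of the theorem unproven.
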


Certainly the constant factors in Theorem~\ref{thm:main} are not
optimal. Birmel\'e, Bondy, and Reed~\cite{bondy07} conjectured the
correct function in the case $k=2$ to be $f(l, 2) = l$. The complete graph on
$2l-1$ vertices shows that, if true, this bound would be tight.
Lov\'asz~\cite{lovasz65} confirmed the conjecture for $l=3$, while
Birmel\'e~\cite{birmele03} confirmed the cases $l=4$ and $l=5$. For larger $l$,
Birmel\'e, Bondy, and Reed~\cite{bondy07} proved that the optimal function
satisfies $f(l, 2)\leq 2l+3$. This was recently improved by Meierling,
Rautenbach and Sasse~\cite{meierling14} to $f(l, 2)\leq 5l/3+29/2$.

There are two constructions which together imply that the function in
Theorem~\ref{thm:main} is asymptotically optimal for large $k$ and $l$. On the
one hand, for all $k$ and $l$ we must have $f(l, k) \geq (k-1)l$, as can be seen
from the example of a complete graph on $kl-1$ vertices: this graph does not contain $k$ vertex-disjoint
cycles of length at least $l$, but to remove all cycles of length at least $l$ one must
delete $kl-1 - (l-1)= (k-1)l$ vertices. This construction also gives the lower
bound $f(l, k) \geq \frac12(k-1) \log_2 k$ whenever $l \geq \frac12\log_2 k$.

On the other hand, for $l < \frac12\log_2 k$ we can obtain the lower bound
$f(l, k) \geq \frac18 k\log_2 k$ using the fact that there exist $3$-regular
graphs on $n$ vertices with girth at least $(1-o(1))\log_2 n$~\cite{erdos63}.
Indeed for $n$ large enough, let $G$ denote such a graph with girth $g(G)$.
Clearly $G$ contains at most $n/g(G)$ vertex disjoint cycles. So fix $k=\lfloor
n/g(G) \rfloor + 1>n/g(G)$ and observe that for $n$ large enough $n\geq \frac12 k
\log_2k$. All cycles in $G$ have length at least $g(n)>\frac{1}{2}\log_2 k>l$.
Thus if $X$ is a transversal of all cycles of length at least $l$ then $G-X$ is
a forest.  Because $G$ is $3$-regular, removing $|X|$ vertices leaves at least
$3n/2 -3|X|$ edges. Since the resulting graph should be a forest, we need
$\frac 32 n - 3|X| \le n - |X|$, and therefore every transversal must have size
$|X| \ge \frac n4\geq \frac18 k\log_2 k$. This gives the desired lower bound
$f(l,k) \geq \frac 18 k\log_2k$.

\paragraph{Notation}
All graphs are assumed to be simple unless stated otherwise. However,
multigraphs do make an appearance in the proof. We define a multigraph $M$ in
the standard way, that is, as an ordered pair $(V, E)$, where $V$ denotes the
vertex set of $M$ and $E$ is the multiset of edges of $M$. For a (multi-)graph
$G$ we denote by $V(G)$ and $E(G)$ the vertex set and the edge (multi-)set of $G$,
respectively. Given two multigraphs $M_1$ and $M_2$ we write $M_1\cup M_2$ for
the multigraph $M=(V,E)$ where $V = V(M_1) \cup V(M_2)$ and $E = E(M_1)\cup E(M_2)$.
In particular, the multiplicity of an edge $e$ in $M_1\cup M_2$ is equal to the
sum of multiplicities of $e$ in $M_1$ and $M_2$. We use the standard asymptotic
notation $\mathcal{O}$, $o$, $\omega$ and $\Omega$.

\paragraph{Tree-width}
Our results also imply an asymptotically optimal upper bound on the tree-width
$\tw(G)$ of every graph $G$ that does not contain $k$ vertex-disjoint cycles of
length at least $l$. We need the following theorem.

\begin{theorem}[Birmel\'e~\cite{birmele02}]\label{thm:birm}
  Suppose that $G$ does not contain a cycle of length at least $l$.
  Then $\tw(G) \leq l-2$.
\end{theorem}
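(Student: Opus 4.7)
The plan is to prove Theorem~\ref{thm:birm} by strong induction on $|V(G)|$. The base case is trivial: if $|V(G)| \le l-1$, then the one-bag tree decomposition has width at most $l-2$. For the inductive step I would first reduce to the 2-connected case. Since the tree-width of a graph equals the maximum tree-width over its blocks, and each block inherits the ``no cycle of length at least $l$'' hypothesis, it suffices to construct a tree decomposition of width at most $l-2$ when $G$ is 2-connected.

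Assume now that $G$ is 2-connected. Pick a longest cycle $C$ in $G$; by hypothesis $|V(C)| \le l-1$. The idea is to use $V(C)$ as a ``central bag'' of the tree decomposition. Write $K_1,\dots,K_r$ for the connected components of $G - V(C)$, and for each $i$ let $A_i = N(K_i)\cap V(C)$ be the attachment set of $K_i$ and let $G_i = G[V(K_i)\cup A_i]$ be the corresponding bridge.

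The core of the argument is to produce, for each bridge $G_i$, a tree decomposition of $G_i$ of width at most $l-2$ that has a bag containing $A_i$ (so that it can be grafted onto the central bag $V(C)$). To invoke induction I would construct an auxiliary graph $G_i'$ on the vertex set $V(G_i)$ by replacing the arc of $C$ between two suitably chosen ``extreme'' attachments of $K_i$ with a direct edge (or short path) joining those attachments, so that $|V(G_i')| < |V(G)|$ and $A_i$ lies in a short cycle of $G_i'$. Once each bridge has such a tree decomposition, the final tree decomposition of $G$ consists of $V(C)$ joined to a root bag of each bridge's decomposition, where the root bag contains $A_i$.

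The main obstacle, and the heart of the proof, is verifying that $G_i'$ still has no cycle of length at least $l$, so that the inductive hypothesis applies. Any new long cycle in $G_i'$ must use the newly added edges; one must show that by splicing back the removed arc of $C$ one obtains a cycle in $G$ strictly longer than $C$, contradicting the choice of $C$ as a longest cycle. Making this ``uncrossing'' argument precise---choosing the right pair of attachments, handling trivial bridges consisting of a single chord, and carefully gluing overlapping bridges---is the technical crux, but once established, the tree decomposition assembles cleanly and every bag has size at most $|V(C)| \le l-1$, yielding $\tw(G)\le l-2$.
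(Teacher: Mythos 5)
The paper does not prove this theorem: it is stated as a citation to Birmel\'e~\cite{birmele02}, so there is no in-paper proof to compare your argument against. Evaluating your sketch on its own terms, the overall strategy (induction on $|V(G)|$, reduce to the 2-connected case, centre the decomposition on a longest cycle $C$ and graft bridge decompositions onto the bag $V(C)$) is sensible and is indeed the spirit of Birmel\'e's argument, but as written there are genuine gaps.

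The most serious one is the interface between the induction and the grafting step. You want, for each bridge $G_i$, a width-$(l-2)$ tree decomposition of $G_i$ (or $G_i'$) \emph{that has a bag containing $A_i$}. The inductive hypothesis you set up only gives a decomposition of width at most $l-2$; it says nothing about any particular vertex set being contained in a single bag, and the observation ``$A_i$ lies on a short cycle of $G_i'$'' does not rescue this, since the vertex set of a short cycle need not lie in any one bag of an arbitrary width-$(l-2)$ decomposition. To make the grafting work you either need to strengthen the inductive statement (e.g.\ to ``for every cycle $C'$ of a 2-connected $G$ with no long cycle there is a width-$(l-2)$ tree decomposition with $V(C')$ as a bag''), or you need to add enough edges in $G_i'$ to make $A_i$ a clique and argue that this does not create a long cycle. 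Neither is present in the proposal, and this is precisely where the real work lies.

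Two further issues you flag but do not resolve are also substantive rather than routine. First, the construction of $G_i'$ by ``replacing the arc of $C$ between two extreme attachments by a direct edge'' is only meaningful for bridges whose attachments sit on one arc; a bridge can have attachments spread all the way around $C$, and then there is no canonical arc to compress (and one needs a separate argument---typically via the fact that consecutive attachments on a longest cycle are at distance at least two---to even bound $|A_i|$ and make $|V(G_i')| < |V(G)|$). Second, the ``uncrossing'' claim that $G_i'$ contains no cycle of length at least $l$ is the crux of the proof, and it is only gestured at. Without a precise statement and verification of these points, the sketch does not yet constitute a proof.
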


Generalizing this, Birmel\'e, Bondy, and Reed proved
that any graph $G$ not containing $k$ vertex-disjoint cycles of length
at least $l$ has tree-width in $\mathcal O(k^2l)$~\cite{bondy07}.
Theorem~\ref{thm:main} allows us to improve this bound:

\begin{corollary}
  Assume that $G$ does not contain $k$ vertex-disjoint
  cycles of length at least $l$. Then
  $\tw(G) \in \mathcal O(kl + k\log k)$.
\end{corollary}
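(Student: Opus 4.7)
The plan is to combine Theorem~\ref{thm:main} with Theorem~\ref{thm:birm} via the standard observation that removing or adding back a single vertex changes the tree-width by at most one.

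Concretely, I would first apply Theorem~\ref{thm:main} to the graph $G$: since $G$ does not contain $k$ vertex-disjoint cycles of length at least $l$, there is a transversal $X\subseteq V(G)$ with $|X|\le f(l,k)\in\mathcal O(kl+k\log k)$ such that every cycle of length at least $l$ in $G$ meets $X$. In particular, the graph $G-X$ contains no cycle of length at least $l$.

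Next I would apply Theorem~\ref{thm:birm} to $G-X$ to obtain $\tw(G-X)\le l-2$. Finally, I would use the elementary fact that for any graph $H$ and any vertex set $Y\subseteq V(H)$ one has $\tw(H)\le \tw(H-Y)+|Y|$; this follows by taking an optimal tree decomposition of $H-Y$ and adding all of $Y$ to every bag, which remains a valid tree decomposition of $H$. Applying this with $H=G$ and $Y=X$ yields
\[
  \tw(G)\le \tw(G-X)+|X|\le (l-2)+\mathcal O(kl+k\log k)\in\mathcal O(kl+k\log k),
\]
as claimed.

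There is no real obstacle here: the entire argument is a short deduction from the two results already in hand, and the only step that is not completely trivial is the monotonicity inequality $\tw(H)\le \tw(H-Y)+|Y|$, which is a classical property of tree decompositions.
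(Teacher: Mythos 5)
Your proposal is correct and follows exactly the same route as the paper: apply Theorem~\ref{thm:main} to get a transversal $X$ with $|X|\in\mathcal O(kl+k\log k)$, apply Theorem~\ref{thm:birm} to bound $\tw(G-X)\le l-2$, and then add $X$ to every bag of an optimal tree-decomposition of $G-X$ to conclude $\tw(G)\le\tw(G-X)+|X|$. Nothing differs in substance from the paper's proof.
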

\begin{proof}
  Assume that $G$ does not contain $k$ vertex-disjoint cycles of length at least
  $l$. By Theorem~\ref{thm:main} there is a set $X\subseteq V(G)$ of size $|X|
  \leq 6kl + 10k\log_2 k+40k + 10\log_2\log_2 k$ such that $G-X$ does not
  contain a cycle of length at least $l$. By Theorem~\ref{thm:birm} we have
  $\tw(G-X)\leq l-2$. We can turn a tree-decomposition of $G-X$ into a
  tree-decomposition of $G$ by adding $X$ to each bag, which gives the bound
  $\tw(G) \leq \tw(G-X) + |X| \leq (6k+1)l+ 10k\log_2 k + 40k + 10\log_2\log_2
  k-2$.
\end{proof}

This is tight in the sense that there are examples of graphs that do not
contain $k$ disjoint cycles of length at least $l$ and whose tree-widths are in
$\Omega(kl+k\log k)$.

In fact, similar constructions as above work. An example where $\tw(G)\geq
kl-2$ is provided by the complete graph on $kl-1$ vertices. For $l \le c \log
k$, for sufficiently small positive constant $c$, we can use fact that there
exist constant-degree expander graphs $G$ on $n$ vertices with $g(G)\in
\Omega(\log n)$ and $\tw(G)\in \Omega(n)$ (using for example the results
in~\cite{lps88} and~\cite{bptw10}). Choosing $k$ such that $ k \cdot g(G) \in
[n+1, 2n]$ one obtains a graph which does not contain $k$ vertex-disjoint cycles
(of any length) but whose tree-width is in $\Omega(k\log k)$.

\section{Proof of the main result}\label{sec:proof}

We will use the following lemma.

\begin{lemma}[Diestel~\cite{diestel05}]\label{lem:cubic}
  For each natural number $k$, let
  \[ s_k := \begin{cases}4k(\log_2 k + \log_2\log_2 k + 4) &\text{if } k \ge 2, \\
    1& \text{if }k=1.
  \end{cases}
  \]
  Then every $3$-regular multigraph on at least $s_k$ vertices contains
  a set of $k$ vertex-disjoint cycles.
\end{lemma}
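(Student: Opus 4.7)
The plan is to prove Lemma~\ref{lem:cubic} by induction on $k$. The base case $k=1$ is immediate: any non-empty cubic multigraph contains a cycle (since every vertex has degree at least $2$), so the claim holds for $s_1=1$. For the inductive step with $k \ge 2$, suppose $G$ is a $3$-regular multigraph on $n \ge s_k$ vertices. The strategy is to find a short cycle $C$, delete its vertices, restore cubicity in what remains, and apply the inductive hypothesis.

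To locate the short cycle, I use a breadth-first search: the ball of radius $d$ around any vertex in a cubic multigraph contains at most $3\cdot 2^d - 2$ vertices, so whenever this quantity is less than $n$ two branches of the BFS tree must meet, producing a cycle of length at most $2d+1$. Taking $d$ minimal with $3\cdot 2^d - 2 \ge n$ and plugging in $n \ge s_k = 4k(\log_2 k + \log_2\log_2 k + 4)$ yields a cycle $C$ of length $\ell \le 2\log_2 n + \mathcal O(1) \le 2\log_2 k + 2\log_2\log_2 k + \mathcal O(1)$.

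Next I form $G' := G - V(C)$. Because exactly $\ell$ edges leave $V(C)$ in $G$, at most $\ell$ vertices of $G'$ have degree less than $3$. I iteratively trim vertices of degree at most $1$ and suppress vertices of degree exactly $2$ (replacing the unique path through each such vertex by a single edge) to obtain a multigraph $G''$ that is cubic (or empty). Each trim or suppression removes exactly one vertex and one edge, while the cubic condition $|E(G'')| = \tfrac{3}{2}|V(G'')|$ forces the total number of operations to equal $\ell$; hence $|V(G'')| = n - 2\ell$. The inequality $n - 2\ell \ge s_{k-1}$ then follows from the direct lower bound $s_k - s_{k-1} \ge 4\log_2 k + 4\log_2\log_2 k + 16$---a short calculation from the definition, where the ``$+4$'' inside $s_k$ is what provides the needed slack against the girth bound on $\ell$. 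Applying the inductive hypothesis to $G''$ produces $k-1$ vertex-disjoint cycles, which lift (by re-inserting the suppressed vertices) to $k-1$ vertex-disjoint cycles inside $G' = G - V(C)$; together with $C$ these give $k$ vertex-disjoint cycles in $G$.

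The main obstacle is the bookkeeping in the reduction. The identity $|V(G'')| = n - 2\ell$ presumes the trim-and-suppress process terminates at a non-empty cubic graph, whereas tree-like components of $G'$ could a priori be entirely consumed by trimming and produce extra vertex loss. Handling this cleanly requires observing that every tree component $T$ of $G'$ must send at least $|V(T)|+2$ edges into $V(C)$, so the total vertex loss from tree components is bounded by the number of edges leaving $V(C)$, which is at most $\ell$. With this accounting in place the inequality $|V(G'')| \ge n - 2\ell$ remains valid, the induction closes, and the lemma follows.
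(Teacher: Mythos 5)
Your proposal reconstructs Diestel's argument, which the paper invokes only by citation: induct on $k$, find a short cycle via the cubic-girth bound, delete it, restore cubicity by trimming and suppressing, and verify $n - 2\ell \ge s_{k-1}$ to close the induction. Two small slips do not affect correctness: the BFS sentence has its inequality reversed (two branches must meet once $3\cdot 2^{d}-2$ \emph{exceeds} $n$, which is how you use it in the next sentence), and $|V(G'')| = n - 2\ell$ should be the inequality $|V(G'')| \ge n - 2\ell$, since a degree-$0$ deletion removes a vertex but no edge --- a point your closing paragraph essentially acknowledges and repairs.
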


Fix $l\geq 3$ and a graph $G$. We say that a cycle in $G$ is \emph{long} if it has
length at least $l$, and otherwise we say that it is \emph{short}. By
\emph{disjoint}, we always mean \emph{vertex-disjoint}.
We assume that $G$ does not contain $k$ disjoint long cycles and
show that $G$ contains a transversal of $\mathcal F_l$
of size at most $f(l,k)$, where
\[ f(l,k) = \begin{cases}6kl + 10k\log_2 k + 40k + 10k\log_2\log_2 k &\text{if }k\geq 2,\\
  0 & \text{if }k=1.
\end{cases}\]
The proof is by induction on $k$, where the base case $k=1$ is obvious.

If $G$ contains a long cycle $C$ of length at most $6l$ then by induction,
$G-V(C)$ contains either $k-1$ disjoint long cycles or a transversal $X$ of
size $f(l,k-1)$. In the first case $G$ contains $k$ disjoint long cycles and in
the second case $X\cup V(C)$ is a transversal of size $f(l,k-1)+6l\leq f(l,k)$.
Therefore we may assume that every long cycle in $G$ contains strictly more
than $6l$ vertices.

Let $H$ denote a maximal subgraph of $G$ with the following properties:
\begin{enumerate}
  \item all vertices of $H$ have degree $2$ or $3$ in $H$;
  \item $H$ contains no short cycle.
\end{enumerate}
Similarly as in \cite{erdos65}, observe that $H$ is the union of a subdivision
of a $3$-regular multigraph and at most $k-1$ disjoint long cycles. If $H$
contains at least $s_k$ vertices of degree $3$ then by Lemma~\ref{lem:cubic},
it contains $k$ disjoint cycles, which by definition of $H$ are all long. So
from now on, we can assume that $H$ contains fewer than $s_k$ vertices of
degree $3$.

\begin{definition}
  We say that a path $P$ is an \emph{$H$-path} if its endpoints are distinct vertices of $H$ and
  if it is internally vertex-disjoint from $H$. Observe that we allow for
  $P\subseteq H$ if the length of $P$ is one. We say that $P$ is a \emph{proper
  $H$-path} if none of its edges are contained in $H$.
\end{definition}

For each $i\in\{2,3\}$ let $V_i\subseteq V(H)$ denote the set of vertices with
degree $i$ in $H$. We modify $G$ by removing all edges from $E(G)\setminus
E(H)$ that are incident to a vertex from $V_3$. Note that any transversal of
the modified graph can be turned into a transversal of the original graph by
additionally removing $V_3$. Furthermore $H$ is still maximal in the modified graph.  From
now on we assume that all vertices of $V_3$ have degree $3$ in $G$. In
particular the endpoints of every proper $H$-path lie in $V_2$.

This implies that every $H$-path has length at most $l$, as otherwise we could
add the path to $H$ without violating either the degree or the cycle condition,
contradicting the maximality. For the same reason, if $P$ is an $H$-path with
endpoints $s,t\in V(H)$, then there exists a path between $s$ and $t$ in $H$ of length
at most $l$. In fact, as $H$ contains no cycles of length at most $2l$, this
path is unique.
Thus the following notion is well-defined.

\begin{definition}[Projection]
  Suppose that $P$ is an $H$-path with endpoints $s, t\in V(H)$. The
  \emph{projection} of $P$, denoted by $\pi(P)$, is defined to be the unique
  path of length at most $l$ between $s$ and $t$ in $H$.

  Let $C \subseteq G$ be a cycle in $G$ that intersects $H$ in at least two
  vertices. We define the projection $\pi(C)$ of $C$ as follows. Let $C =
  P_1\cup \dotsc \cup P_m$ be a decomposition of $C$ into distinct $H$-paths.
  Then we define the projection of $C$ to be the multigraph $\pi(C) =
  \pi(P_1)\cup \dotsb \cup \pi(P_m)$.

  If $P$ is a path in $G$ with distinct endpoints in $H$ (not necessarily an
  $H$-path), then we define the projection analogously: let $P= P_1\cup \dotsb \cup P_m$
  be a decomposition into  $H$-paths and define $\pi(P) = \pi(P_1)\cup \dotsb
  \cup \pi(P_m)$.
\end{definition}

We remark that in the definition above, the decomposition of a cycle or path
into distinct $H$-paths is unique up to permutation, so that the projection is
in fact well-defined.

We claim that information about the length of a cycle can be recovered by
looking at the following property of its projection:
\begin{definition}
  A multigraph $M$ is called \emph{even} if the multiplicity of every edge in
  $M$ is even.
\end{definition}

\begin{lemma}\label{lemma:projection}
  Suppose that $C$ is a cycle in  $G$ which intersects $H$ in at least two vertices. If
  $V(\pi(C))$ induces a tree in $H$, then $C$ is short. If $\pi(C)$ is not
  even, then $C$ is long.

\end{lemma}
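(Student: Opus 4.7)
The plan is to work in the $\mathbb F_2$-edge-space of $G$ using the ``small cycles'' $K_i := P_i \cup \pi(P_i)$ attached to each proper $H$-path $P_i$ in the decomposition $C = P_1 \cup \dots \cup P_m$. Each $K_i$ is a cycle in $G$ of length $|P_i|+|\pi(P_i)| \leq 2l$, and the standing reduction that no cycle of $G$ has length in the interval $[l,6l]$ (since every long cycle exceeds $6l$) forces $|K_i|<l$, whence
\[
|P_i|+|\pi(P_i)| \leq l-1.
\]
Letting $J$ index the proper $H$-paths of $C$, $m_P := |J|$ and $m_H := m-m_P$, and writing $\pi(C)_{\mathrm{odd}} \subseteq E(H)$ for the set of edges of odd multiplicity in the multigraph $\pi(C)$, a direct edge-by-edge check yields
\[
C + \sum_{i\in J} K_i \equiv \pi(C)_{\mathrm{odd}} \pmod 2 \qquad\text{in }\mathbb F_2^{E(G)}.
\]

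For the first statement, suppose $V(\pi(C))$ induces a tree $T \subseteq H$. By uniqueness of length-$\leq l$ paths in $H$, every $\pi(P_i)$ lies in $T$, so $\pi(C)$ is a closed walk in a tree; every closed walk in a tree uses each edge an even number of times, so $\pi(C)_{\mathrm{odd}} = \emptyset$ and the identity collapses to $C = \sum_{i\in J} K_i$ in $\mathbb F_2^{E(G)}$. To upgrade this to $|C|<l$, I would invoke the cycle-length dichotomy: building $C$ up by symmetric-differencing the $K_i$ one at a time in an order respecting a leaf-removal of $T$, the intermediate sums lie in the cycle space of $\Gamma := T \cup \bigcup_{i\in J} P_i$ and, with a careful choice of order, are simple cycles of $G$. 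Since each step changes the edge count by strictly less than $l$, if $|C|>6l$ some intermediate would have length in the forbidden interval $[l,6l]$, contradicting the dichotomy. Hence $|C|<l$.

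For the second statement, suppose $\pi(C)_{\mathrm{odd}}\neq\emptyset$. As a nonempty Eulerian subgraph of $H$, $\pi(C)_{\mathrm{odd}}$ contains a simple cycle $D\subseteq H$; since $H$ contains no short cycle (property (2)) and $G$ no long cycle of length at most $6l$, we have $|D|>6l$. Therefore the multi-walk $\pi(C)$ has length at least $|\pi(C)_{\mathrm{odd}}|\geq|D|>6l$. Combining this with
\[
|C| + \sum_{i=1}^{m} |\pi(P_i)| \;=\; \sum_{i=1}^{m} \bigl(|P_i|+|\pi(P_i)|\bigr) \;\leq\; (l-1) m_P + 2 m_H
\]
and with the trivial bound $|C|\geq m_P+m_H$ (each $|P_i|\geq 1$), a direct manipulation, exploiting that the cycle $D$ can be ``covered'' by the $\pi(P_i)$'s (each of length $\leq l$) only if enough of them contribute and hence $m$ and therefore $|C|$ are large, yields $|C|\geq l$, so $C$ is long.

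The main obstacle is the first part, where the $\mathbb F_2$-identity $C = \sum_{i\in J} K_i$ does not by itself bound $|C|$, since symmetric differences of short cycles can in principle be long. The tree structure of $T$ and the $[l,6l]$ gap in cycle lengths of $G$ are both essential, and they enter together through the leaf-removal induction sketched above.
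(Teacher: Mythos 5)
Your approach is genuinely different from the paper's: you work in the $\mathbb{F}_2$-cycle space and try to decompose $C$ into the small cycles $K_i=P_i\cup\pi(P_i)$, whereas the paper takes a minimum counterexample $C$ and repeatedly ``reroutes'' it through a subpath $P'\subseteq\pi(P)$ that is internally disjoint from $C$, splitting $C$ into two cycles $C_1,C_2$ with strictly fewer proper $H$-paths and then using the length dichotomy (one of $C_1,C_2$ has length $\geq\abs{C}/2\geq 3l$ in the first case; both have length $\leq 2l$ in the second). Your $\mathbb F_2$-identity $C+\sum_{i\in J}K_i=\pi(C)_{\mathrm{odd}}$ is correct, and the first-part reduction to ``$\pi(C)$ is a closed walk in a tree, hence even'' is a clean observation. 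However, both halves of your proof have real gaps.

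For the first statement, you need to turn $C=\sum_{i\in J}K_i$ over $\mathbb F_2$ (with $\abs{K_i}<l$) into the bound $\abs{C}<l$, and for this you assert that one can order the $K_i$ so that every partial symmetric difference is a \emph{simple cycle} of $G$; the length gap $(l,6l]$ then prevents the partial sums from jumping. This is the crux, and you do not prove it. In general a partial sum of cycles is merely an Eulerian subgraph (possibly a disjoint union of cycles), to which the length-gap argument does not apply; the statement ``$C$ is a sum of cycles of length $<l$ hence $\abs{C}<l$'' is false without the extra structure (e.g.\ a long cycle triangulated by chords is a sum of triangles), so the tree structure and the gap must be used together via an argument you have only gestured at. You would need to exhibit and justify a concrete ordering and prove the intermediate sums are simple cycles; as written this is a conjecture, not a proof.

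For the second statement the argument actually does not go through. You correctly deduce that if $\pi(C)_{\mathrm{odd}}\neq\emptyset$ it contains a cycle $D$ of $H$ with $\abs{D}>6l$, hence $\sum_i\abs{\pi(P_i)}>6l$. But this does not force $\abs{C}\geq l$. If $\abs{C}<l$ then $m\leq\abs{C}<l$, yet each $\pi(P_i)$ with $i\in J$ can have length up to about $l-2$, so $\sum_i\abs{\pi(P_i)}$ can be on the order of $l^2$, which is compatible with $>6l$ for all $l\geq 9$ or so. The ``enough of them contribute'' heuristic only yields $m\geq 7$, not $m\geq l$. More fundamentally, a short cycle $C$ does \emph{not} bound the total length of the projections; the lemma is asserting parity cancellation, not a bound on $\sum_i\abs{\pi(P_i)}$, and a purely quantitative count cannot see the cancellation. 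This is exactly where the paper's splitting step is doing the work: replacing $C$ by $C_1=P_1\cup P'$ and $C_2=P_2\cup P'$ keeps both short ($\abs{C_i}\leq\abs{C}+\abs{P'}\leq 2l$), reduces the number of proper $H$-paths, and preserves the edge parities of the projection, which is the structural induction you would need to replicate inside the $\mathbb F_2$ framework. To salvage your approach for the second statement you would need an analogous reduction, not a counting bound.
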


\begin{proof}
  Among all cycles for which the lemma fails, we may pick a cycle $C$ whose
  decomposition into $H$-paths minimizes the number of proper $H$-paths. If
  none of these $H$-paths is proper, then $C\subseteq H$ and thus $\pi(C)=C$ is not even and long.
  Therefore we may assume that $C$ contains at least one proper
  $H$-path $P$.

  Note that $\pi(P)$ is a path in $H$ with the same endpoints as $P$. If $C$
  contains all edges of $\pi(P)$, then we actually have $C=P\cup \pi(P)$ and so
  the projection of $C$ is even. Moreover,
  the length of  $C$ is $|P|+|\pi(P)|\leq 2l$. Since all long cycles have
  length greater than $6l$ we see that $C$ must be short and we are done.
  Therefore, we can assume that at least one edge of $\pi(P)$ does not belong
  to $C$. Since $\pi(P)$ is a path with endpoints in $V(C)$, there exists a
  path $P'\subseteq \pi(P)$ with endpoints $s,t\in C$ which is internally
  vertex-disjoint from $C$ and whose edges are not edges of $C$. Let $P_1, P_2
  \subseteq C$ denote the two internally disjoint $s,t$-paths in $C$ and
  consider the two cycles $C_1\coloneqq P_1\cup P'$ and $C_2\coloneqq P_2\cup
  P'$. Observe that since $\pi(P')\subseteq \pi(C)$ we have
  $V(\pi(C))=V(\pi(C_1))\cup V(\pi(C_2))$. Additionally, the parity of each
  edge in $\pi(C)$ is equal to the parity of the same edge in $\pi(C_1)\cup
  \pi(C_2)$.

  We are now ready to prove the first claim. Assume that $C$ is long and that
  $H[V(\pi(C))]$ is a tree. Then the projections of $C_1$ and $C_2$ induce
  trees as well and therefore both cycles contain at least one proper $H$-path.
  In particular both cycles contain strictly fewer proper $H$-paths than $C$.
  Furthermore, at least one of the two cycles has length at least $\abs C/2$.
  Since $C$ has length at least $6 l$, this cycle is still long, which
  contradicts the minimality of $C$.

  For the second claim, assume that $C$ is short and that $\pi(C)$ is
  not even. Observe that we have  $|C_i| \le |C| + |P'| \le l + |P'|$ for
  each $i\in\{1,2\}$. As $P'$ is a subgraph of $\pi(P)$ we know that $|C_i| \le
  2 l$, which implies that both cycles $C_i$ are short. Since $H$ contains only
  long cycles, this means that both $C_1$ and $C_2$ contain at least one proper
  $H$-path, so both $C_1$ and $C_2$ contain fewer proper $H$-paths than $C$.
  Finally, as $\pi(C)$ is not even and since each edge in $\pi(C_1)\cup \pi(C_2)$
  has the same parity as the same edge in $\pi(C)$, at least one of
  $\pi(C_1)$ and $\pi(C_2)$ is not even. This contradicts the minimality
  of $C$.
\end{proof}



\begin{definition}
  Let $X \subseteq V(H) \cup E(H)$ be a set of vertices and edges of $H$. Let
  us denote by $G - X$ and $H-X$ the graphs obtained by deleting from $G$ and
  $H$ the edges and vertices in $X$.

  Then we say that $X$ is \emph{$\pi$-preserving} if $P \subseteq G-X$ implies
  $\pi(P)\subseteq H-X$ for all $H$-paths $P$.
\end{definition}

The following lemma is the crucial
ingredient of the proof of Theorem \ref{thm:main}. For now, we only state the
lemma; the proof is given in the next subsection.

\begin{lemma}\label{lemma:forest-cut}
  There exists a $\pi$-preserving set $X$ for which $H-X$ is a forest and such
  that $|X \cap (V_2\cup E(H[V_2]))| \leq 3|V_3|/2+k$.
\end{lemma}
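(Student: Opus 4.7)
\emph{Plan.} I will build $X$ in two stages. In Stage~1 I break all cycles of $H$ using elements that lie outside $V_2 \cup E(H[V_2])$ (so they do not contribute to the budget). In Stage~2 I augment $X$ with $V_2$-vertices to enforce the $\pi$-preserving condition. The key observation simplifying the bookkeeping is: for a \emph{proper} $H$-path $P$, every internal vertex lies outside $V(H)$ and every edge lies outside $E(H)$, so $X\subseteq V(H)\cup E(H)$ touches $P$ only at its endpoints. Hence $\pi$-preserving is equivalent to: for every proper $H$-path $P$ such that $\pi(P)$ meets $X$, at least one endpoint of $P$ lies in $X$.

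\emph{Stage 1 (cycle-breaking).} Place every vertex of $V_3$ into $X$, together with one edge from each of the at most $k-1$ extra long cycles of $H$. Removing $V_3$ shatters the subdivision part of $H$ into disjoint $V_2$-paths (the branch interiors), and the extra cycles become paths; so $H-X$ is a forest. Since $V_3$ is free and we spent one $V_2$--$V_2$ edge per extra cycle, this stage costs at most $k-1$ toward the budget.

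\emph{Stage 2 (fixing $\pi$-preserving).} After Stage~1 the projection of any proper $H$-path $P\subseteq G-X$ whose $\pi(P)$ traverses a $V_3$-vertex (``cross-branch $H$-paths'') fails to lie in $H-X$; similarly for an $H$-path with $\pi(P)$ crossing the chosen broken edge of an extra cycle. By maximality of $H$, every proper $H$-path satisfies $|P|+|\pi(P)|\le l-1$, so the endpoints of every such \emph{bad} $H$-path lie within distance $\le l-3$ of the removed element (the $V_3$-vertex or the broken edge) along the incident branches/cycle. Hence it remains to choose a cover $C\subseteq V_2$ that hits at least one endpoint of every bad $H$-path, with $|C|\le 3|V_3|/2+1$; then $X\cup C$ has budget at most $3|V_3|/2+k$.

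\emph{Main obstacle and approach.} The hard step is producing this cover. I would model it as a hypergraph covering problem: for each branch $B_e$ of $H$ give a ``budget'' of essentially one $V_2$-vertex $g_e\in V_2(B_e)$, and require that every bad $H$-path $P=(s,t)$ with $s\in V_2(B_e)$ and $t\in V_2(B_{e'})$ is hit, i.e., $s=g_e$ or $t=g_{e'}$. Equivalently, we seek a system of representatives across branches that covers every bad pair. Feasibility should follow from a Hall-type argument on the bipartite graph between branches and bad $H$-paths, where the constraints $|P|+|\pi(P)|\le l-1$ and the absence of cycles of length at most $6l$ in $H$ (hence the uniqueness of $\pi(P)$) rule out Hall obstructions: any ``deficient'' family of bad $H$-paths would force a configuration of short cycles in $G$ violating one of these structural constraints. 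Verifying the absence of Hall obstructions in this local-to-global argument, and handling the bad within-cycle $H$-paths analogously (choosing the broken edge inside each extra cycle together with at most a constant number of $V_2$-vertices on it), constitute the bulk of the work.
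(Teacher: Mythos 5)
Your proposal sketches a top-down plan but leaves the decisive step unproved, and the paper's proof takes a genuinely different route that sidesteps your hard step altogether. Your Stage~1 (put $V_3$ and one edge per extra cycle into $X$, making $H-X$ a forest) is sound and your reformulation of the $\pi$-preserving condition for proper $H$-paths is correct. But Stage~2 — producing a set $C\subseteq V_2$ of size $\le 3|V_3|/2+1$ that hits an endpoint of every ``bad'' $H$-path — is exactly where the content of the lemma lies, and you only assert that ``feasibility should follow from a Hall-type argument'' and that deficient families of bad paths ``would force'' short cycles. This is not verified, and it is far from obvious: the covering problem you describe (one representative per branch, with disjunctive constraints imposed by bad $H$-paths) is a 2-SAT-like instance, not a bipartite matching, so Hall's theorem does not apply directly; you would have to rule out all unsatisfiable constraint patterns using the girth of $H$ and the length bound $|\pi(P)|\le l$, and no such argument is given. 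There is also a quantitative slip: you budget $k-1$ edges for the extra cycles \emph{and} ``a constant number of $V_2$-vertices'' on each such cycle, which pushes the count to $3|V_3|/2 + (1+c)(k-1)$ and exceeds $3|V_3|/2+k$ as soon as $c\ge 1$ and $k\ge 3$.

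The paper avoids committing to $V_3\subseteq X$ up front. Instead it defines a \emph{valid} $\pi$-preserving set as one containing at most one element per component of $H[V_2]$ (so validity alone yields the size bound), takes such a set $X$ of \emph{maximal} size, and argues by contradiction: if $H-X$ had a cycle $C$, it builds the local graph $G_C$ around $C$, shows (via Claim~\ref{claim:obvious}) that projections stay in the tree $H_C-e^*$, applies Menger's theorem to find a single $A$-$B$-cut vertex $x$, and then — splitting on whether $x$ has degree two or three — shows that $X\cup\{x\}$ or $X\cup\{\{x,x^+\}\}$ is still a valid $\pi$-preserving set, contradicting maximality. That indirect, extremal argument is what replaces your explicit covering step, and you would need to supply a comparable argument (or prove your Hall/2-SAT claim) for the proof to go through.
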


We can now finish the proof of Theorem~\ref{thm:main}. By the lemma, there exists a
$\pi$-preserving set  $X$ for which $H-X$ is a forest and such that
$|X\cap (V_2\cup E(H[V_2]))|\leq 3|V_3|/2+k$. Suppose that $C$ is a cycle that intersects
$H$ at least twice. Then because $X$ is $\pi$-preserving, we know in
particular that all vertices of $V(\pi(C))$ are contained in the same component
of $H-X$. Since each component of $H-X$ is a tree, the graph $H[V(\pi(C))]$
must also be a tree. Thus, by Lemma~\ref{lemma:projection}, the cycle $C$ is
short. It follows that every long cycle in $G-X$ intersects $H$ at most once.

To construct the transversal we define the following two sets.
\begin{enumerate}
  \item Let $X'\subseteq V(H)$ be a set containing the vertices in $X\cap V_2$ and also
    containing one endpoint of each edge in $X\cap E(H[V_2])$.
  \item Let $Z\subseteq V(H)$ denote the set of all vertices $z\in V(H)$ for
    which there exists some long cycle $C_z$ such that $V(C_z)\cap
    V(H)=\left\{z\right\}$.
\end{enumerate}

We now claim that $V_3\cup X'\cup Z$ is a transversal of all long cycles. Every long
cycle $C$ intersects $H$ at least once since otherwise $C$ could be added to
$H$. If $C$ intersects $H$ exactly once than it intersects $Z$. If $C$
intersects $H$ at least twice then, by the observation above, this means that
$C$ intersects $X$. But then $C$ must intersect either $V_3$ or $X'$. So
$V_3\cup X'\cup Z$ is a transversal in $G$. Recall that in the beginning, we
modified the graph $G$ by removing all edges of $E(G)\setminus E(H)$ that are
incident to a vertex of $V_3$. Since we remove $V_3$ anyway, $V_3\cup X'\cup Z$
is also a transversal in the original graph.

It remains to bound the size of this transversal. If for some $z\neq z'\in Z$
the cycles $C_z$, $C_{z'}$ intersect, then one can see that the assumption
$|C_z|,|C_{z'}| \geq 6l$ implies that $C_z\cup C_{z'}$ contains a $z$-$z'$-path
of length at least $l$, which contradicts the fact that there are no $H$-paths
of length $l$ or longer. Therefore $\left\{C_z\mid z\in Z\right\}$ is a
collection of vertex disjoint long cycles and in particular $\abs{Z} < k$.
Furthermore, we have $|X'| \leq 3|V_3|/2+k$. Since $|V_3|< s_k$, we get \[
|V_3\cup X'\cup Z| < |V_3| + \frac32|V_3| + 2k < \frac52s_k + 2k \leq f(l,k).
\] This completes the proof of Theorem~\ref{thm:main}. However, we still need
to prove Lemma~\ref{lemma:forest-cut}.

\subsection{Proof of Lemma \ref{lemma:forest-cut} }

Let us call a $\pi$-preserving set \emph{valid} if it contains at most one
edge or vertex from every component of $H[V_2]$.
There exists at least one valid $\pi$-preserving set: the empty set. Let $X$
denote a $\pi$-preserving set of maximal size. Since $H$ is the disjoint union
of a subdivision of a $3$-regular graph on $|V_3|$ vertices and fewer than
$k$ cycles, the fact that $X$ is valid immediately implies that $|X\cap (V_2\cup E(H[V_2]))| \leq
3|V_3|/2 + k$. We will show that $H-X$ is a forest. Assume
towards a contradiction that $H-X$ contains a cycle. Among all such cycles let
$C=(x_0\dots,x_n)$ denote one of minimum length. Since $H$ only contains long cycles, we
have $|C|>6l$. Let $B_{H-X}(C,l)$ denote the ball of radius $l$ around $C$ in
$H-X$, i.e., the set of vertices that have distance at most $l$ to a vertex of
$C$ in $H-X$. Let $H_C$ be the subgraph of $H-X$ induced by $B_{H-X}(C,l)$. We
now have some claims about the structure of $H_C$. First of all, it is clear
that $C\subseteq H_C$. Moreover:

\begin{claim}\label{claim:1}
  In $H_C$, every vertex $v$ has a unique nearest vertex in $C$ (which may be
  $v$ itself if $v\in V(C)$).
\end{claim}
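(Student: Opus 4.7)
The plan is to argue by contradiction: suppose some vertex $v\in H_C$ has two distinct vertices $u_1, u_2 \in V(C)$ both at minimum distance $d$ from $v$ in $H-X$, with $d\le l$ (since $v\in H_C$). I would fix shortest $v$-$u_i$ paths $P_i$ in $H-X$ for $i=1,2$, and observe first that neither $P_1$ nor $P_2$ uses a vertex of $C$ other than its endpoint $u_i$: any such internal vertex would be strictly closer to $v$ than $u_i$, contradicting that $u_i$ is a nearest vertex of $C$ to $v$.

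Next I would look at the last vertex $w$ common to $P_1$ and $P_2$ when traced from $v$. By the fact that both $P_i$ are shortest paths, the subpaths $P_1[w,u_1]$ and $P_2[w,u_2]$ have a common length $d' \le d \le l$, and by construction they are internally disjoint from each other and from $C$. Their concatenation yields a $u_1$-$u_2$ path $Q$ in $H-X$ of length $2d' \le 2l$ whose interior is disjoint from $V(C)$. Note $d' \ge 1$, otherwise $w = u_1 = u_2$, contradicting $u_1 \ne u_2$.

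Finally I would combine $Q$ with the shorter of the two arcs of $C$ between $u_1$ and $u_2$, of length $a \le |C|/2$, to obtain a cycle $C'$ in $H-X$. Its length satisfies
\[
|C'| \;=\; a + 2d' \;\le\; |C|/2 + 2l \;<\; |C|,
\]
where the strict inequality uses $|C|>6l$. This contradicts the choice of $C$ as a shortest cycle in $H-X$, completing the proof. The only subtlety is making sure that $Q$ together with the chosen arc of $C$ really forms a (simple) cycle rather than a closed walk; this is exactly what the observation about $P_i$ avoiding $V(C)\setminus\{u_i\}$ and the choice of $w$ as the last common vertex give us, so I do not expect any real obstacle beyond carefully laying out these disjointness properties.
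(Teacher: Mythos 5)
Your argument is correct and follows the same strategy as the paper's proof: take shortest paths from $v$ to its two putative nearest vertices on $C$, close them up with the shorter arc of $C$, and obtain a cycle of length at most $|C|/2 + 2l < |C|$, contradicting the minimality of $C$. You spell out the details (choosing the last common vertex $w$, verifying the resulting object is a genuine cycle) that the paper's two-line proof leaves implicit, but there is no new idea and no gap.
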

\begin{proof}
  This is clear if $v\in V(C)$, so assume otherwise. By the definition of
  $H_C$, the distance from $v$ to any nearest vertex on $C$ is at most $l$.
  If there are two nearest vertices of $v$ on $C$, then using the shortest
  path between them on $C$, we obtain a cycle
  of length at most $|C|/2 +2l < |C|$ in $H_C$, where the inequality follows from $|C|>4l$. But
  this would contradict the minimimality of $C$.
\end{proof}

This claim shows in particular that the projection map $p\colon V(H_C)\to V(C)$
taking vertices of $H_C$ to their unique nearest vertex in $C$ is well-defined.
In fact, the preimages of this projection have rather nice properties:

\begin{claim}\label{claim:structure}
  The following hold:
  \begin{enumerate}[(i)]
    \item for every $x\in V(C)$, the graph $H_C[p^{-1}(x)]$ is a tree;
    \item for distinct vertices $x,y\in C$, there are no edges between $p^{-1}(x)$
      and $p^{-1}(y)$ in $H_C\setminus C$;
  \end{enumerate}
\end{claim}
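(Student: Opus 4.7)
The plan is to prove both parts by using the minimality of $C$: any would-be cycle inside $H_C$ that is strictly shorter than $C$ contradicts the choice of $C$, since $H_C \subseteq H-X$ and $|C|$ is the minimum cycle length in $H-X$. Also, recall from the outer argument that every cycle in $H$ has length strictly greater than $6l$.

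For part (i), first I would establish that $H_C[p^{-1}(x)]$ is connected. Take any $v \in p^{-1}(x)$ and let $P$ be a shortest $v$-$x$ path in $H_C$ (it has length at most $l$). For any interior vertex $w$ of $P$, the prefix $P[v,w]$ and suffix $P[w,x]$ are shortest paths, so $d_{H_C}(w,x) < d_{H_C}(v,x)$. If $p(w) = y \ne x$, then $d_{H_C}(w,y) \le d_{H_C}(w,x)$ and the triangle inequality gives $d_{H_C}(v,y) \le d_{H_C}(v,x)$, contradicting either the uniqueness or the minimality in Claim~\ref{claim:1}. Hence the shortest $v$-$x$ path lies entirely in $p^{-1}(x)$, establishing connectedness. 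Next, to show $H_C[p^{-1}(x)]$ is acyclic, let $T$ be a BFS tree of $H_C[p^{-1}(x)]$ rooted at $x$; every vertex has depth at most $l$ in $T$. Any non-tree edge $uv \in E(H_C[p^{-1}(x)]) \setminus E(T)$ together with the unique $u$-$v$ path in $T$ would form a cycle of length at most $2l + 1$ in $H$. But $H$ contains no cycle of length at most $6l$, so no such non-tree edge exists, and $H_C[p^{-1}(x)] = T$ is a tree.

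For part (ii), suppose for contradiction there is an edge $e = uv \in E(H_C) \setminus E(C)$ with $u \in p^{-1}(x)$, $v \in p^{-1}(y)$, and $x \ne y$. Using the connectedness established in (i), pick a $u$-$x$ path $P_x \subseteq H_C[p^{-1}(x)]$ of length at most $l$ and a $v$-$y$ path $P_y \subseteq H_C[p^{-1}(y)]$ of length at most $l$. Let $Q$ be the shorter of the two $x$-$y$ arcs of $C$, so $|Q| \le |C|/2$. Concatenating $P_x$, $Q$, $P_y$, and the edge $e$ gives a closed walk of length at most $2l + 1 + |C|/2$. This walk is actually a simple cycle, because $V(P_x) \subseteq p^{-1}(x)$, $V(P_y) \subseteq p^{-1}(y)$, and $V(Q) \cap p^{-1}(x) = \{x\}$, $V(Q) \cap p^{-1}(y) = \{y\}$ (preimages of distinct vertices are disjoint, and $Q$ meets $p^{-1}(x)$ and $p^{-1}(y)$ only at its endpoints). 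Since $|C| > 6l$, we have $2l + 1 + |C|/2 < |C|$, producing a shorter cycle in $H-X$ and contradicting the minimality of $C$. The two degenerate subcases where $u \in V(C)$ (so $u=x$ and $P_x$ is trivial) or $v \in V(C)$ are handled identically, with the length bound becoming even smaller.

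The only subtle point I expect is justifying that each cycle constructed from auxiliary paths is in fact a simple cycle rather than a closed walk; this hinges on the fact that preimages of the projection $p$ partition $V(H_C)$ and that intersections with $V(C)$ occur only at the projection point, both of which follow directly from the well-definedness of $p$ in Claim~\ref{claim:1}.
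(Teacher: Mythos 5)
Your proof is correct and takes essentially the same approach as the paper's, just with substantially more detail. For part (ii) the argument is identical: an edge between $p^{-1}(x)$ and $p^{-1}(y)$, joined via the shortest $x$-$y$ arc of $C$ and paths of length at most $l$ within each preimage, yields a cycle of length at most $|C|/2+2l+1<|C|$. For part (i), the paper's one-line argument (``diameter at most $2l$, so any cycle would be of length at most $4l$'') tacitly assumes that $H_C[p^{-1}(x)]$ is connected and that short diameter forces short girth; you make both of these precise by first showing that every shortest $v$-$x$ path in $H_C$ stays inside $p^{-1}(x)$ (using the uniqueness of nearest vertices from Claim~\ref{claim:1}), and then running a BFS-tree argument that gives the slightly sharper bound $2l+1$. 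The extra care is legitimate --- the paper's $4l$ bound is stated without the connectedness step you supply --- but the underlying mechanism (minimality of $C$ plus girth $>6l$ in $H$) is the same.
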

\begin{proof}
  For (i) simply observe that the diameter of $H_C[p^{-1}(x)]$ is at most $2l$, so
  any cycle would be of length at most $4l$; however $H$ does not contain cycles that are this short.
  The argument for (ii) is similar to the argument in the proof of Claim~\ref{claim:1}: if such
  an edge exists, then $H_C$ contains a cycle of length at most $|C|/2+2l+1 < |C|$, using
  $|C|>6l > 4l+2$. This would contradict the minimality of $C$.
\end{proof}

By the above claim, the graph $H_C$ is just the cycle $C$ with
trees attached at every vertex. In particular, $H_C-e$ is a tree for any edge $e\in E(C)$.

We will now construct a larger graph $G_C$ where $H_C\subseteq G_C\subseteq G-X$ as follows.
Let $D = \{x_l,x_{l+1},\dotsc,x_{n-l}\}$.
For each $x\in D$, let $P(x)$ be the set of all $H$-paths $P$ in $G-X$ such
that $x \in V(\pi(P))$.  Since $X$ is $\pi$-preserving, every such path satisfies
$\pi(P)\subseteq H-X$. We then define
\[ G_C = H_C \cup \bigcup_{x\in D} P(x). \]
It is worth noting that $G_C \cap H = H_C$.
Let us further denote by $e^*$ the edge $\{x_0, x_n\}$.
We have the following very important claim, whose proof we postpone to the end of the section.

\begin{claim}\label{claim:obvious}
For every proper $H$-path $P\subseteq G_C$, we have $\pi(P)\subseteq H_C-e^*$.
  In particular, every path in $G_C-e^*$ with endpoints in $V(H_C)$ projects
  to a subgraph of $H_C-e^*$.
\end{claim}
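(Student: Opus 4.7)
The second statement follows from the first by decomposition: any path $R\subseteq G_C-e^*$ with endpoints in $V(H_C)$ splits into its maximal $H$-subpaths $R=R_1\cup\dotsb\cup R_m$, each of which is either a single edge of $E(H_C-e^*)$ (whose projection is itself) or a proper $H$-path in $G_C$ (handled by the first statement); the projection of $R$ is then the union of the projections of its pieces. I therefore focus on the first statement.

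Let $P$ be a proper $H$-path in $G_C$ with endpoints $s,t\in V(H)$. Being proper, $P$ contains no edge of $H$, so $e^*\notin E(P)$ and in fact $P\subseteq G_C-e^*$; moreover every edge of $P$ lies in some $P'\in P(x)$ with $x\in D$. I first establish an atomic version: for every $P'\in P(x)$ with $x\in D$, $\pi(P')\subseteq H_C-e^*$. Since $X$ is $\pi$-preserving, $\pi(P')\subseteq H-X$ with $|\pi(P')|\le l$; because $x\in V(C)$ lies on $\pi(P')$, every vertex of $\pi(P')$ is within $H-X$-distance $l$ of $C$, hence in $V(H_C)$. As $H_C$ is induced, $\pi(P')\subseteq H_C$. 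To exclude $e^*$, use Claim~\ref{claim:structure}: $H_C-e^*$ is a tree with spine $C-e^*$, and the definition $D=\{x_l,\dotsc,x_{n-l}\}$ forces $d_{H_C-e^*}(x,x_0)\ge l$ and $d_{H_C-e^*}(x,x_n)\ge l$ whenever $x\in D$. If $\pi(P')$ contained $e^*$, then $\pi(P')-e^*$ would include an $e^*$-avoiding subpath from $x$ to an endpoint of $e^*$ of length $\ge l$; together with $e^*$ this already forces $|\pi(P')|\ge l+1$, a contradiction. Applied to the edges of $P$ incident to $s$ and $t$, this atomic statement places $s,t\in V(H_C-e^*)$: the first edge of $P$ from $s$ lies in some $P'\in P(x)$ and its other endpoint is internal to $P$ (hence outside $V(H)$), so $s$ is an endpoint of $P'$, giving $s\in V(\pi(P'))\subseteq V(H_C-e^*)$.

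I extend this to $\pi(P)\subseteq H_C-e^*$ by contradiction, supposing $\pi(P)\ne Q$, where $Q$ denotes the unique $s$-$t$ path in the tree $H_C-e^*$. Two opposing length bounds drive the contradiction. First, because $P$ is proper while $\pi(P)\subseteq H$, $P$ and $\pi(P)$ share only their endpoints, so $P\cup\pi(P)$ is a cycle in $G$ of length $\le 2l$; by the standing hypothesis that every long cycle has more than $6l$ vertices this cycle is short, giving $|P|+|\pi(P)|<l$. Second, since $\pi(P)\ne Q$ and both lie in $H-X$, the symmetric difference $\pi(P)\triangle Q$ is a nonempty edge-disjoint union of cycles in $H-X$; by minimality of $C$ each such cycle has length $>6l$, so $|\pi(P)|+|Q|\ge|\pi(P)\triangle Q|>6l$, which combined with $|\pi(P)|<l$ yields $|Q|>5l$.

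The main obstacle is to contradict $|Q|>5l$ via the structure of $P$. Write $P=Q_1\cup\dotsb\cup Q_r$ with each $Q_i$ a maximal subpath lying in some $P'_i\in P(x_i)$; consecutive pieces $Q_i,Q_{i+1}$ meet at a vertex of $P$ that is internal to both $P'_i$ and $P'_{i+1}$. By the atomic step each $\pi(P'_i)\subseteq H_C-e^*$ passes through $x_i\in D$, so every endpoint of $P'_i$ lies at tree-distance $\le l$ from $x_i$. Using the shared internal vertex between $P'_i$ and $P'_{i+1}$ one stitches together an $H$-path in $G-X$ of length $\le l$ joining an endpoint of $P'_i$ to an endpoint of $P'_{i+1}$; analyzing how these auxiliary $H$-paths constrain the sequence $x_1,\dotsc,x_r$ along the spine $C-e^*$, together with $r\le|P|<l$, controls $d_{H_C-e^*}(s,t)=|Q|$ by a small multiple of $l$, contradicting $|Q|>5l$.
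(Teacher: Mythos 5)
Your reduction of the second statement to the first and your ``atomic'' step (that $\pi(P')\subseteq H_C-e^*$ for every $P'\in P(x)$ with $x\in D$, hence $s,t\in V(H_C)$) are both correct, and the atomic observation is a nice way to package part of what the paper also uses. The symmetric-difference calculation is also sound: writing $Q$ for the unique $s$--$t$ path in the tree $H_C-e^*$, if $\pi(P)\neq Q$ then $\pi(P)\triangle Q$ contains a cycle of $H-X$, so by minimality of $C$ one gets $|\pi(P)|+|Q|>6l$ and hence $|Q|>5l$ once $|\pi(P)|<l$ is known.

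The gap is the final step, where you claim to ``control $d_{H_C-e^*}(s,t)=|Q|$ by a small multiple of $l$.'' As written this is only a sketch, and I do not believe it closes. First, the quantitative reasoning does not give $O(l)$: you have $r<l$ stitches, each yielding an $H$-path of length at most $l$, so the most this chaining produces directly is an $O(l^2)$ bound, which does not contradict $|Q|>5l$. Second, and more seriously, the stitched $H$-paths are themselves proper $H$-paths lying in $G_C$, so to convert their length-$\le l$ bound into an $H_C-e^*$ distance bound you would need exactly the claim you are trying to prove (or an induction you have not set up); the atomic step only covers paths that literally belong to some $P(x)$, not the new stitches. Third, the information the atomic step provides about $s$ and $t$ (each within tree-distance $l$ of \emph{some} vertex of $D$) is too weak: it does not prevent $p(s)$ from being near $x_0$ and $p(t)$ near $x_n$, which is precisely the bad configuration where $\pi(P)$ would cross $e^*$ and $|Q|$ is of order $|C|$, not $O(l)$. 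Ruling out this configuration is the heart of the claim. The paper does this by exploiting one specific $P'\in P(x)$ (the one containing the first edge of $P$), using that $\pi(P')$ passes through $x\in D$ to pin $p(d)$, the projection of its other endpoint, into $\{x_l,\dotsc,x_{2l}\}$, and then observing that the $H$-path from $d$ to $t$ inside $P\cup P'$ would have a projection of length $>l$, which is impossible. Your proposal does not contain that (or an equivalent) argument, so the claim is not established.
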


Using the claim, we can now finish the proof of Lemma~\ref{lemma:forest-cut}.
Let $A=\left\{x_0,\dots,x_l\right\}$ and
$B=\left\{x_{n-l},\dots,x_n\right\}$. We claim that $G_C-e^*$ does not contain
two internally disjoint $A$-$B$-paths. Suppose for a contradiction that $P_1$
and $P_2$ are two such paths, where we can assume that both $P_1$ and $P_2$
intersect both $A$ and $B$ in exactly one vertex. By Claim~\ref{claim:obvious}
both paths project onto a subgraph of $H_C-e^*$, which implies in particular
that $\pi(P_1)\cup \pi(P_2)$ does not contain $e^*$. We can combine $P_1$ and
$P_2$ into a cycle in $G_C-e^*$ by adding the shortest paths between the
endpoints of $P_1$ and $P_2$ in $C[A]$ and $C[B]$, respectively. The projection
of this cycle lies in $H_C-e^*$ which is a tree. Thus, by
Lemma~\ref{lemma:projection}, this cycle is short. In particular we have $|P_1|
< l$. However, as the projection of $P_1$ avoids $e^*$, connecting the
endpoints of $P_1$ using the shortest path in $C[A\cup B]$ results in a cycle
$C_1$ whose projection is not even, since the multiplicity of $e^*$ in
$\pi(C_1)$ is one. So by Lemma \ref{lemma:projection} the cycle $C_1$ is long.
Since any path in $C[A\cup B]$ is of length at most $2l+1$,  the length of
$C_1$ is bounded by $\abs {P_1} + 2l+1 \leq 3l$. This is a contradiction since
$G$ does not contain a long cycle of length at most $3l$. We conclude that
there are no two internally disjoint $A$-$B$-paths in $G_C-e^*$.

By Menger's theorem, $G_C-e^*$ contains a single-vertex $A$-$B$-cut. Denote the
cut vertex by $x$. Because $C$ contains an $A$-$B$-path from $x_l$ to
$x_{n-l}$, we must have $x\in D$. Thus $x$ has degree at least two in $H_C-e^*$.
We distinguish two cases, depending on whether $x$ has degree two or three in
$H_C-e^*$.

First, assume that $x$ is a vertex of degree two in $H_C-e^*$. By the
maximality of $X$, we know that $X\cup \{x\}$ is not a valid $\pi$-preserving
set. One possibility is that $X\cup \{x\}$ is $\pi$-preserving, but it
is not valid. Since $X$ is valid on its own, this means that $x$ belongs to a
component of $H[V_2]$ which intersects $X$. But since $x$ belongs to the cycle
$C\subseteq H-X$, this is impossible. Thus it must be that $X\cup \{x\}$ is not
a $\pi$-preserving set. Since $X$ by itself is $\pi$-preserving, the definition
implies that there exists an $H$-path $P$ in $G-X$ such that $x \in V(\pi(P))$
and $x \notin V(P)$. Note that $P\subseteq G_C-e^*$ by the definition of $G_C$.
Since $x$ has degree two in $H_C-e^*$ and since $H_C-e^*$ is a tree, we know
that $H_C-e^* - x$ breaks into exactly two trees $T_1$ and $T_2$. As $x \in
V(\pi(P))$ and $\pi(P)\subseteq H_C-e^*$, it must be that $P$ has one endpoint
in $T_1$ and the other in $T_2$. It follows that $T_1 \cup T_2 \cup P$ is a
connected graph and thus $T_1 \cup T_2 \cup P$ must contain an $A$-$B$-path.
This is a contradiction with the fact that $x$ separates $A$ and $B$ in
$G_C-e^*$ and completes the proof in this case.

Next, assume that $x$ is a vertex of degree three in $H_C-e^*$. Recall
that then $x$ has degree $3$ in $G$ as well, and in particular no proper
$H$-path has $x$ as an endpoint. As in the previous case, $H_C -e^*- x$ breaks
into exactly three trees $T_1, T_2$ and $T_3$. Let $x^-$ and $x^+$ be the
neighbours of $x$ on $C$. Without loss of generality assume that $x^-\in T_1$
and $x^+\in T_3$. Consider any $H$-path $P\subseteq G-X$ such that $x\in
V(\pi(P))$ (so in particular $P\subseteq G_C-e^*$). Since $\pi(P)$ is a
path contained in $H_C-e^*$, the endpoints of $P$ must be in two
different trees. However, it cannot happen that one endpoint of $P$ is in $T_1$
and the other in $T_3$, since then $T_1 \cup T_3 \cup P$ would contain an
$A$-$B$-path, contradicting the fact that $x$ is an $A$-$B$-cut vertex in
$G_C-e^*$. For the same reason there are no two $H$-paths $P_1,
P_2\subseteq G-X$ such that $x\in V(\pi(P_1))\cap V(\pi(P_2))$ and such that
$P_1$ has endpoints in $T_1$ and $T_2$ and $P_2$ has endpoints in $T_2$ and
$T_3$. We conclude that there is some $j\in \{1,3\}$ such that all $H$-paths
$P$ with $x\in V(\pi(P))$ have one endpoint in $T_2$ and the other endpoint in
$T_j$. Without loss of generality, assume $j=1$.
We now claim that by adding the edge $\{x, x^+\}$ to $X$ we get again a $\pi$-preserving set. If we assume otherwise, then there exists a $H$-path
$P$ in $G - (X\cup \{\{x,x^+\}\})$ whose projection contains $\{x, x^+\}$. This
is not possible, as such a path satisfies $x\in V(\pi(P))$ and has one endpoint
lying in $T_3$. Thus $X \cup \{\{x,x^+\}\}$ is $\pi$-preserving. In fact, since the edge
$\{x,x^+\}$ is incident to the degree-three vertex $x$, it is automatically a
valid $\pi$-preserving set. This contradicts the maximality of $X$ and completes the proof
of Lemma \ref{lemma:forest-cut}.

We now present the missing proof of Claim~\ref{claim:obvious}.

\begin{proof}[Proof of Claim \ref{claim:obvious}]
  The second statement clearly follows from the first.

  We start the proof with an observation about the $H$-paths in $G-X$ whose
  endpoints lie in $V(H_C)$. Let $P\subseteq G-X$ be such a path with endpoints
  $a,b\in V(H_C)$. Since $X$ is $\pi$-preserving, we have $\pi(P)\subseteq H-X$.
  Note that we know of at least one path in $H-X$ from $a$ to $b$: the path $Q
  = Q_1Q_2Q_3$, where $Q_1$ is the shortest path in $H_C$ from $a$ to $p(a)$,
  $Q_2$ is the shortest path from $p(a)$ to $p(b)$ on $C$, and $Q_3$ is the
  shortest path from $p(b)$ to $b$ in $H_C$. This path $Q$ is contained in
  $H_C$ and it has length at most $2l+|C|/2$.
  The important observation is that $\pi(P) = Q$. Indeed, if this were not so,
  then $Q\cup \pi(P) \subseteq H-X$ would contain a cycle of length at most
  $2l+|C|/2 + |\pi(P)| \leq 3l+|C|/2<|C|$ (using $|C|>6l$), contradicting the
  minimality of $C$.

  This shows in particular that for every $H$-path $P\subseteq G-X$ with
  endpoints in $V(H_C)$, we have $\pi(P)\subseteq H_C$. It remains to show the
  stronger statement that if additionally $P\subseteq G_C$, then
  $\pi(P)\subseteq H_C-e^*$.

  To obtain a contradiction, assume that $P\subseteq G_C$ is an $H$-path with
  endpoints $a,b\in V(H_C)$ such that $\{x_0,x_n\}\in E(\pi(P))$. Since the
  projection of $P$ has length at most $l$, and by the observation above, we
  can assume without loss of generality that $p(a)\in \{x_0,\dotsc,x_l\}$ and
  $p(b)\in \{x_{n-l},\dotsc,x_n\}$. Now let $c$ be the neighbour of $a$ on $P$.
  The edge $\{a,c\}$ belongs to $G_C$ but not to $H_C$, so there is some $x\in
  D=\{x_l,\dotsc,x_{n-l}\}$ and some $H$-path $P'\in P(x)$ such that
  $\{a,c\}\in E(P')$. Let $d$ be the other endpoint of $P'$ (one endpoint is
  $a$). Since $|C|>6l$, $x\in D$, $x\in V(\pi(P'))$, and $p(a)\in
  \{x_0,\dotsc,x_l\}$, we must have $p(d)\in \{x_l,\dotsc,x_{2l}\}$. The union
  of $P$ and $P'$ contains an $H$-path with endpoints $d$ and $b$. However, the
  projection of this $H$-path has length at least $\min{\{l+1,|C|-3l\}} > l$,
  which is impossible. \end{proof}

\section{Conclusions}

The main contribution of the paper is showing the asymptotically optimal
Erd\H{o}s-P\'osa function for the case of long cycles and growing $k$ and
$\ell$ and thus answering the question asked in \cite{bondy07} and
\cite{fiorini14}. We conclude the paper by mentioning some open problems:

\begin{itemize}
  \item{\textbf{$S$-cycles:}} Kakimura, Kawarabayashi, and
    Marx~\cite{kakimura2011} introduced a different generalization of the
    standard \ep{} theorem. They considered the family of $S$-cycles, i.e., all
    cycles of a graph which intersect a specified set $S$, and proved that such
    a family of cycles has the \ep{} property. Their result was later improved
    by Pontecorvi and Wollan~\cite{pontecorvi2012}, resulting in the following theorem:

    \begin{theorem}
      For any graph and any vertex subset $S$, the graph either contains $k$
      vertex-disjoint $S$-cycles or a vertex set of size $\mathcal{O}(k \log k)$
      that meets all $S$-cycles.
    \end{theorem}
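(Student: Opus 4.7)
The plan is to emulate the induction-plus-maximal-subgraph framework behind Theorem \ref{thm:main}. I would induct on $k$, with the base case $k = 1$ trivial. For the inductive step, I would first argue that we may assume that no $S$-cycle has only $O(\log k)$ vertices: any such short $S$-cycle $C$ can be removed and we recurse on $G - V(C)$, paying only an additive $O(\log k)$ towards the transversal.

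Next, I would take a maximal subgraph $H \subseteq G$ in which every vertex has degree $2$ or $3$, arranged so that every cycle of $H$ is an $S$-cycle (for instance by insisting that each component of $H$ has sufficiently many incidences with $S$). As in the main proof, $H$ is then essentially a subdivision of a cubic multigraph on $|V_3|$ vertices plus at most $k-1$ extra cycles. If $|V_3| \geq s_k$, Lemma \ref{lem:cubic} delivers $k$ vertex-disjoint cycles in $H$, all of them $S$-cycles, completing the packing side.

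Otherwise $|V_3| = O(k \log k)$, and the goal is an $O(k \log k)$ transversal. Following the paper, I would seek an analog of Lemma \ref{lemma:forest-cut}: a $\pi$-preserving set $X \subseteq V(H) \cup E(H)$ of size $O(|V_3|) + k$ whose removal turns $H$ into a forest, where $\pi$ is redefined relative to the $S$-cycle structure rather than cycle length. The absence of short $S$-cycles plays the same role here as the absence of short long cycles in the main proof: it ensures that the ``closing path'' of an $H$-path is unique and short, so projection is well-defined, and it forces the analog of Lemma \ref{lemma:projection}, namely that any cycle whose projection induces a tree in $H$ cannot be an $S$-cycle. The argument in Section \ref{sec:proof} then goes through with only cosmetic changes.

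The main obstacle will be handling the ``single-touch'' $S$-cycles, i.e.\ cycles meeting $S$ in exactly one vertex and otherwise lying outside of $H$. Such cycles behave like cycles through a fixed root and are invisible to the $\pi$-preserving argument inside $H$, since their entire topology is inherited from $G - V(H)$ together with one boundary vertex. The remedy mirrors the set $Z$ in the proof of Theorem \ref{thm:main}: one collects a single witness vertex of $V(H)$ for each such ``external'' $S$-cycle, and a standard short-path argument using the absence of short $S$-cycles shows that distinct witnesses correspond to vertex-disjoint $S$-cycles, so there can be at most $k-1$ of them. The final transversal $V_3 \cup X' \cup Z$ then has size $O(k \log k)$, as required.
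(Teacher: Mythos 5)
The paper does not prove this theorem: it is quoted in the Conclusions section as a known result of Pontecorvi and Wollan~\cite{pontecorvi2012}, who obtained it by a reduction to the classical Erd\H{o}s--P\'osa theorem rather than by the projection machinery of Section~\ref{sec:proof}. Your plan of transplanting the machinery of Theorem~\ref{thm:main} is therefore a genuinely different route, but as sketched it has a real gap.

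The gap is that the projection framework in Section~\ref{sec:proof} is calibrated to the length parameter $l$, and the $S$-cycle problem has no such parameter. In the long-cycle proof, the maximality of $H$ forces every $H$-path and every projection $\pi(P)$ to have length at most $l$; this is what makes $\pi$ well-defined and \emph{unique}, what drives the minimality induction in Lemma~\ref{lemma:projection} (the bounds $|C_i|\le 2l$ and ``one of $C_1,C_2$ has length $\ge |C|/2 > 3l$''), and what makes the entire ball construction $B_{H-X}(C,l)$ and Claims~\ref{claim:1}--\ref{claim:obvious} go through. For $S$-cycles the natural maximality condition is ``$H$ has degrees $2$ or $3$ and no cycle avoiding $S$'' (your parenthetical ``sufficiently many incidences with $S$'' does not encode this), and the maximality then only forces an $H$-path $P$ to avoid $S$ internally and to admit an $S$-avoiding closing path $\pi(P)$ in $H$; it gives \emph{no} upper bound on the lengths of $P$ or $\pi(P)$. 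Removing short $S$-cycles first, as you propose, bounds the length of $S$-cycles but not of $S$-avoiding $H$-paths or projections, so the radius-$l$ ball $H_C$, the bound $|\pi(P)|\le l$ in the proof of Claim~\ref{claim:obvious}, and the arithmetic in Claims~\ref{claim:1} and~\ref{claim:structure} all lose their footing. Your claim that ``the argument in Section~\ref{sec:proof} then goes through with only cosmetic changes'' is therefore not justified; this part would need a genuinely new idea (or a different notion of ``closeness'' for the cut lemma) before the sketch becomes a proof.
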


    Since the vertex set $S$ can be the vertex set of the whole graph, this
    result is asymptotically tight. In 2014, Bruhn, Joos, and Schaudt
    \cite{bruhn2014long} combined the family of $S$-cycles with the family of
    long cycles and proved that the family of $S$-cycles of length at least
    $\ell$ has the \ep{} property with $f(k, \ell) = O(\ell k \log k)$. Thus,
    it is natural to ask if Theorem \ref{thm:main} generalizes to $S$-cycles as
    well.

  \item{\textbf{Edge-version:}} A family of graphs $\mathcal{F}$ is said to have the
    \emph{edge-\ep{}} property if there exists a function $f \colon \mathbb{N} \to
    \mathbb{R}$ such that every graph $G$ which does not contain $k$
    edge-disjoint members of $\mathcal{F}$ contains a set of $f(k)$ edges
    which meets all copies of members of $\mathcal{F}$ in $G$. Since the
    pioneering papers by  Erd\H{o}s and P\'osa \cite{erdos62,erdos65} it has
    been known that the family of cycles has the edge-\ep{} property as well.
    Namely, the following is true:
    \begin{theorem*}
      Any graph $G$ contains either $k$ edge-disjoint cycles or a set of $(2 +
      o(1)) k \log k$ edges meeting all its cycles.
    \end{theorem*}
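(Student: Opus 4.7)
The plan is to follow the classical Erd\H{o}s-P\'osa induction strategy from 1965, adapted from vertex-disjoint cycles and vertex-transversals to edge-disjoint cycles and edge-transversals. I would proceed by induction on $k$, with the base case $k=1$ being trivial: either $G$ contains a cycle, or $G$ is acyclic and the empty edge-transversal suffices.

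For the inductive step I would first reduce to a multigraph $G'$ of minimum degree at least $3$ by iteratively deleting vertices of degree at most $1$ (which lie on no cycle) and suppressing vertices of degree $2$ (replacing a degree-$2$ vertex $v$ with incident edges $uv$ and $vw$ by a single new edge $uw$, possibly creating a loop if $u=w$ or a parallel edge if $uw$ already exists in $G'$). There is a natural bijection between the cycles of $G$ and those of $G'$ under which edge-disjointness is preserved, and edge-transversals of $G'$ lift back to edge-transversals of $G$ of the same size. Next I would invoke a Moore-bound argument: in a multigraph of minimum degree at least $3$ on $n$ vertices, a breadth-first search from any vertex reaches more than $n$ vertices within depth $(1+o(1))\log_2 n$ and hence closes a cycle of length at most $(2+o(1))\log_2 n$. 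Provided $G'$ has $n\ge c\,k\log k$ vertices for a suitable constant $c$, this yields a cycle $C\subseteq G'$ of length at most $(2+o(1))\log_2 k$. Applying the inductive hypothesis to $G'-E(C)$, which contains no collection of $k-1$ edge-disjoint cycles (else combining them with $C$ would produce $k$ edge-disjoint cycles in $G'$), furnishes an edge-transversal of size $(2+o(1))(k-1)\log(k-1)$; appending $E(C)$ gives an edge-transversal of $G'$ of size at most $(2+o(1))k\log k$. In the remaining case $n<c\,k\log k$, I would take a feedback edge set directly: in a multigraph of minimum degree $\ge 3$ on $n$ vertices this has size at most $|E(G')|-n+c(G')\le n/2+c(G')\le n$, which fits the target bound when $c\le 2$.

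The main obstacle will be pinning down the sharp leading constant $2$, rather than the $4$ one would get from a naive reduction to the vertex Erd\H{o}s-P\'osa theorem. The improvement stems from the fact that each short cycle of length $\ell$ contributes exactly $\ell$ edges (not $\ell$ vertices) to the transversal, and that the feedback edge number of a min-degree-$3$ multigraph on $n$ vertices is at most roughly $n/2$, whereas a vertex-transversal in such a graph can be as large as $\Omega(n)$. Both the Moore-bound short-cycle step and the small-$n$ base case require careful bookkeeping to preserve the leading constant at $2$; this is essentially the delicate analysis already present in Erd\H{o}s and P\'osa's original paper, now applied to the edge-transversal quantity rather than the vertex-transversal one.
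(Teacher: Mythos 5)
The statement you are asked to prove appears in the paper's concluding section as a \emph{cited} classical result of Erd\H{o}s and P\'osa; the paper itself gives no proof of it, so there is nothing internal to compare against. I will therefore only assess your sketch on its own terms.

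Your general plan (reduce to minimum degree $\ge 3$ by suppression, use a Moore-type girth bound to extract a short cycle, recurse) is in the right spirit, but two steps as written are genuinely incorrect. First, the implication ``if $n \ge c\,k\log k$ then $G'$ has a cycle of length at most $(2+o(1))\log_2 k$'' does not follow from the Moore bound: the girth bound you invoke is $(2+o(1))\log_2 n$, which only becomes $(2+o(1))\log_2 k$ when $n$ is \emph{at most} $k^{1+o(1)}$. A lower bound $n\ge c\,k\log k$ is compatible with $n$ being, say, $2^k$, in which case the shortest cycle you are guaranteed could have length $\Theta(k)$ and your recursion produces a transversal of size $\Theta(k^2)$, not $(2+o(1))k\log k$. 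The missing ingredient is a packing lemma (an analogue of the paper's Lemma~2.1, i.e.\ Diestel's version of the original Erd\H{o}s--P\'osa cubic-multigraph lemma) that says: if the simplified graph has at least $\Theta(k\log k)$ vertices, then it already contains $k$ vertex-disjoint, hence edge-disjoint, cycles. Only after this reduction is it legitimate to assume $n$ is $O(k\log k)$ and conclude that the shortest cycle has length $(2+o(1))\log_2 k$.

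Second, the terminal case is wrong as stated. You claim that a multigraph of minimum degree $\ge 3$ on $n$ vertices has a feedback edge set of size at most $|E(G')|-n+c(G')\le n/2 + c(G')$. The inequality $|E(G')|-n \le n/2$ requires $|E(G')|\le 3n/2$, i.e.\ \emph{maximum} degree at most~$3$, which your suppression step does not enforce. With only a minimum-degree condition, $|E(G')|$ can be arbitrarily large relative to $n$ (e.g.\ two vertices joined by $2k$ parallel edges, giving feedback edge number $2k-1$ while $n=2$), so the chain of inequalities fails. The fix is again to observe that when the edge count (not the vertex count) is still large, you must be able to keep finding edge-disjoint short cycles; the quantity whose smallness justifies stopping and taking a feedback edge set is the cyclomatic number $|E|-|V|+c$, and you need a separate argument that this is $O(k\log k)$ once the packing attempt fails. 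Until those two points are repaired, the argument does not establish the claimed $(2+o(1))k\log k$ bound.
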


    Pontecorvi and Wollan \cite{pontecorvi2012} generalized this result to the
    case of $S$-cycles by a clever reduction to the standard vertex-version of the
    problem. Already Birmel\'e et al.~\cite{bondy07} asked if the family of long
    cycles has the edge-\ep{} property. Unfortunately, the gadget trick from
    \cite{pontecorvi2012} breaks down in the case of long cycles and thus
    nothing is known in this scenario. It would be interesting to see if
    our approach could be applied for proving that the family of long cycles
    has the edge-\ep{} property.
\end{itemize}

\bibliographystyle{abbrv}
\bibliography{paper}

\end{document}